\theoremstyle{plain}
\newtheorem{Cor}{Corollary}
\newtheorem{Thm}{Theorem}
\newcommand*{\Ann}{\ensuremath{\mathrm{Ann\,}}}
\newcommand*{\R}{\ensuremath{\mathbb{R}}}
\newcommand*{\Z}{\ensuremath{\mathbb{Z}}}
\newcommand*{\C}{\ensuremath{\mathbb{C}}}
\begin{document}
	
	\date{}
	
	\author{
		L\'aszl\'o Sz\'ekelyhidi\\
		{\small\it Institute of Mathematics, University of Debrecen,}\\
		{\small\rm e-mail: \tt szekely@science.unideb.hu,}
	}
	
	\title{Characterization of Locally Compact Abelian Groups Having Spectral Synthesis}

	\maketitle
	
	\begin{abstract}
		 In this paper we solve a long-standing problem which goes back to Laurent Schwartz’s work on mean periodic functions. Namely, we completely characterize those locally compact Abelian groups having spectral synthesis. So far a characterization theorem was available for discrete Abelian groups only. Here we use the localization concept for the ideals of the Fourier algebra of the underlying group. Recently we have shown that localizability of ideals is equivalent to synthesizability. Based on this equivalence we proved that if spectral synthesis holds on a locally compact Abelian group, then it holds on each extension of it by a a locally compact Abelian group consisting of compact elements, and also on any extension to a direct sum with a copy of the integers. Then, using Schwartz's result and Gurevich's counterexamples we apply the structure theory of locally compact Abelian groups to obtain our characterization theorem.  
	\end{abstract}

	\footnotetext[1]{The research was supported by the  the
	Hungarian National Foundation for Scientific Research (OTKA),
	Grant No.\ K-134191.}\footnotetext[2]{Keywords and phrases:
	variety, spectral synthesis}\footnotetext[3]{AMS (2000) Subject Classification: 43A45, 22D99}
	
	\section{Introduction}
	The study of spectral synthesis started with the fundamental paper of L.~Schwartz \cite{MR0023948}, where the following result was proved:
	
	\begin{Thm}\label{Sch}
		Every mean periodic function is the sum of a series of exponential monomials which are limits of linear combinations of translates of the function.
	\end{Thm}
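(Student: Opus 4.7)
The plan is to translate the problem into one about entire functions via the Fourier--Laplace transform and duality. Let $V\subseteq C(\R)$ denote the closed translation-invariant subspace generated by $f$ in the topology of uniform convergence on compacta. Mean periodicity of $f$ is by definition the assertion that $V\neq C(\R)$, which by Hahn--Banach is equivalent to the annihilator
\[
I(V)=\{\mu\in\mathcal{E}'(\R):\mu*g=0\text{ for every }g\in V\}
\]
being a nonzero closed ideal of the convolution algebra $\mathcal{E}'(\R)$ of compactly supported distributions. I would fix a nonzero $\mu\in I(V)$ and consider its Fourier--Laplace transform $\hat\mu$, which by the Paley--Wiener--Schwartz theorem is an entire function of exponential type.

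Next I would identify which exponential monomials $x^k e^{\lambda x}$ belong to $V$. A direct computation shows that $\mu*(x^k e^{\lambda\cdot})=0$ precisely when $\lambda$ is a zero of $\hat\mu$ of multiplicity strictly greater than $k$. Writing $\{\lambda_n\}$ for the zero set of $\hat\mu$ with multiplicities $m_n$, every monomial $x^k e^{\lambda_n x}$ with $k<m_n$ is therefore annihilated by $\mu$. A Hahn--Banach / bipolar argument, applied to every $\nu\in I(V)$ rather than just to $\mu$, then shows that these monomials in fact belong to $V$: they cannot be separated from $V$ by any compactly supported distribution vanishing on $V$.

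The third and most delicate step is to produce the series expansion $f=\sum_n P_n(x)e^{\lambda_n x}$ with $\deg P_n<m_n$. The idea is to represent $f$ by a Cauchy-type integral
\[
f(x)=\frac{1}{2\pi i}\oint_{\Gamma_N}\frac{F(\lambda)}{\hat\mu(\lambda)}e^{\lambda x}\,d\lambda,
\]
where $F$ is an auxiliary entire function built from $f$ and $\mu$ (essentially the Fourier--Laplace transform of a compactly supported truncation $\mu*(\chi_{[a,b]}f)$ chosen so that the identity holds on a fixed bounded interval in $x$), and $\{\Gamma_N\}$ is an expanding sequence of contours avoiding the zeros of $\hat\mu$. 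Applying the residue theorem at the zeros of $\hat\mu$ enclosed by $\Gamma_N$ produces terms of the form $P_n(x)e^{\lambda_n x}$, and letting $N\to\infty$ yields the series.

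The main obstacle is precisely this final convergence step: one must control the quotient $F/\hat\mu$ on the nested contours $\Gamma_N$, which requires a careful choice of $\Gamma_N$ keeping a sufficient margin from the zeros of $\hat\mu$, combined with Cartan-type minimum-modulus estimates and Jensen's formula for entire functions of exponential type. This is the technical heart of Schwartz's paper and admits no routine shortcut; the preceding steps are essentially clean duality arguments.
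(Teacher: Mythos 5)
The paper does not prove this statement at all: Theorem~\ref{Sch} is quoted as Schwartz's classical 1947 result and is simply cited to \cite{MR0023948}, so there is no in-paper argument to compare yours against. What you have written is, in outline, exactly Schwartz's original strategy --- dualize to the ideal of annihilating measures/distributions, pass to Fourier--Laplace transforms and Paley--Wiener, identify the spectrum with the zeros (counted with multiplicity) of the transforms, and recover $f$ by a contour-integral/residue expansion over those zeros. As a roadmap it is faithful and the first two steps are essentially correct clean duality arguments, with one imprecision worth flagging: the monomials $x^k e^{\lambda_n x}$ attached to zeros of a \emph{single} $\hat\mu$ need not lie in $\tau(f)$; membership in the variety is governed by the \emph{common} zeros (with multiplicities) of $\hat\nu$ for all $\nu$ in the annihilator ideal, and your bipolar argument only places in $V$ the monomials attached to that common zero set. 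One then has to check that the residue expansion produces nonzero contributions only at those points, which is true but needs saying.

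The genuine gap is the one you yourself name: the convergence of the expansion. Writing the partial sums as contour integrals $\frac{1}{2\pi i}\oint_{\Gamma_N} F(\lambda)\hat\mu(\lambda)^{-1}e^{\lambda x}\,d\lambda$ and ``letting $N\to\infty$'' is precisely where the theorem lives. One must construct the auxiliary function $F$ so that the integral actually represents $f$ on a prescribed interval, choose the contours $\Gamma_N$ so that they keep a quantified distance from the zeros of $\hat\mu$ (which forces the grouping of the series into finite blocks --- the theorem is only true with such grouping, not term by term), and then prove the integrals over $\Gamma_N$ tend to zero using Cartan minimum-modulus estimates and Jensen's formula. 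None of this is routine, and without it you have not proved convergence of the series, nor even that the grouped partial sums converge to $f$ rather than to something else. So the proposal is an accurate description of the known proof's architecture, but it is not a proof: the analytic core is acknowledged rather than supplied.
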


Here "limit" is meant as uniform limit on compact sets. A continuous complex valued function on the reals is called {\it mean periodic} if the closure -- with respect to uniform convergence on compact sets -- of the linear span of its translates is a proper subspace in the space of all continuous complex valued functions. Calling this closure the variety of the function, the above result says that in the variety of each mean periodic function all exponential monomials span a dense subspace. If the function is not mean periodic, then its variety is the whole space of continuous complex valued functions, hence in this case even all polynomials span a dense subspace in it, by the Stone--Weierstrass theorem. 
\vskip.2cm

The basic concepts in this result can easily be generalized to more general situations. Here we shortly summarize the notation and terminology (see \cite{Sze23}). Given a commutative topological group $G$ we denote by $\mathcal C(G)$ the space of all continuous complex valued functions equipped with the topology of uniform convergence and with the pointwise addition and pointwise multiplication with scalars. If $f$ is in $\mathcal C(G)$ and $y$ is in $G$, then $\tau_yf$ denotes the {\it translate of $f$} defined by
$$
\tau_yf(x)=f(x+y)
$$
for each $x$ in $g$. A closed linear subspace $V$ in $\mathcal C(G)$ is called a {\it variety} on $G$ if it is {\it translation invariant}, that is, $\tau_yf$ is in $V$ for each $f$ in $V$ and $y$ in $G$.  Given an $f$ in $\mathcal C(G)$ the intersection of all varieties including $f$ is denoted by $\tau(f)$, and it is called the {\it variety of $f$}.
\vskip.2cm

Given a commutative topological group $G$ continuous complex homomorphisms of $G$ into the multiplicative group of nonzero complex numbers are called {\it exponentials}, and continuous complex homomorphisms of $G$ into the additive group of complex numbers are called {\it additive functions}. The elements of the function algebra in $\mathcal C(G)$ generated by all exponentials and additive functions are called {\it exponential polynomials}. Functions of the form 
\begin{equation}\label{poly}
f(x)=P\big(a_1(x),a_2(x),\dots,a_k(x)\big)m(x)
\end{equation}
are called {\it exponential monomials}, if $P:\C^k\to\C$ is a complex polynomial in $k$ variables, $a_1,a_2,\dots,a_k$ are additive functions, and $m$ is an exponential. Every exponential polynomial is a linear combination of exponential monomials. If $m=1$, then the above function is called a {\it polynomial}. 
\vskip.2cm

Exponential monomials can be characterized by difference operators. For each exponential $m$ and for every $y$ in $G$ we let $\Delta_{m;y}=\delta_{-y}-m(y)\delta_0$, $\delta_g$ being the point mass supported at the point $g$ in $G$.  We use the notation
$$
\Delta_{m;y_1,y_2,\dots,y_n}=\Delta_{m;y_1}*\Delta_{m;y_2}*\cdots *\Delta_{m;y_n}.
$$
In particular, we simply write $\Delta_{y_1,y_2,\dots,y_n}$ for $\Delta_{1;y_1,y_2,\dots,y_n}$.

\begin{Thm}\label{expmonchar}
	Let $G$ be a locally compact Abelian group and $m$ an exponential. The continuous function $f:G\to\C$ is an $m$-exponential monomial if and only if it is contained in a finite dimensional variety, and there is a natural number $n$ such that
	\begin{equation}\label{expmoneq}
		\Delta_{m;y_1,y_2,\dots,y_{n+1}}*f=0
	\end{equation}
	holds for each $y_1,y_2,\dots,y_{n+1}$ in $G$.
\end{Thm}

If we drop the condition of continuity of $f$ and $m$ in this theorem, then the solutions of \eqref{expmoneq} are called {\it generalized exponential $m$-monomials}. In particular, if $m=1$, then the solutions of \eqref{expmoneq} are called {\it generalized polynomials}.
The smallest natural number $n$  which  satisfies \eqref{expmoneq} is called the {\it degree} of $f$. 
\vskip.2cm

Using these concepts we say that the variety $V$ on $G$ is {\it synthesizable}, if  exponential monomials span a dense subspace in it. We say that {\it spectral synthesis} holds on $V$, if every subvariety of $V$ is synthesizable. We say that {\it spectral synthesis} holds on the group $G$, or $G$ is {\it synthesizable} if every variety on $G$ is synthesizable. Hence Schwartz's theorem can be formulated by saying that spectral synthesis holds on $\R$. In the paper \cite{MR0098951}, M.~Lefranc proved that spectral synthesis holds on $\Z^n$. In \cite{MR0177260}, R.~J.~Elliott made an attempt to prove that spectral synthesis holds on every discrete Abelian group, but his proof was incorrect. In fact, a counterexample for Elliott's statement was given in  \cite{MR2039084}. In \cite{MR2340978}, a characterization theorem was proved for discrete Abelian groups having spectral synthesis. Several papers studied spectral analysis and synthesis on different special locally compact Abelian groups ( see e.g. \cite{MR3137136, MR3438592}) but no general characterization theorem has been available so far.
\vskip.2cm

In the present paper we give a complete characterization of those locally compact Abelian groups on which spectral synthesis holds. Using the localization method we worked out in \cite{Sze23}, we show that if spectral synthesis holds on a locally compact Abelian group, then it also holds on its extensions by a  locally compact Abelian group consisting of compact elements (see \cite{Sze23a}), and on its extensions to a direct sum with the group of integers (see \cite{Sze23b}). Finally,  using the results of Schwartz \cite{MR0023948} and Gurevich \cite{MR0390759} we apply the structure theory of locally compact Abelian groups.

\section{Derivations and localization}

The basic result in \cite{Sze23} is a characterization of synthesizable varieties on a locally compact abelian group. To recall this result we need the concept of {\it localization of ideals}. Given a locally compact abelian group there is a one-to-one correspondence between all varieties  in the space $\mathcal C(G)$ of all continuous complex valued functions on $G$ and the closed ideals  in the measure algebra $\mathcal M_c(G)$ of all compactly supported complex Borel measures on $G$. This correspondence is given by the {\it annihilators}, and the basic property: for each variety $V$ in $\mathcal C(G)$ and closed ideal $I$ in $\mathcal M_c(G)$ we have
$$
V=\Ann \Ann V,\hskip2cm I=\Ann \Ann I.
$$
Here 
$$
\Ann V=\{\mu\in \mathcal M_c(G):\,\mu*f=0\enskip\text{for each}\enskip f\in V\}
$$
and
$$
\Ann I=\{f\in \mathcal C(G):\,\mu*f=0\enskip\text{for each}\enskip \mu \in I\}.
$$

We can reformulate this correspondence in terms of the ideals of the {\it Fourier algebra} $\mathcal A(G)$ of the group $G$. By the injectivity of the Fourier transform, the mapping $\mu\mapsto \widehat{\mu}$ sets up a topological algebra isomorphism between the measure algebra $\mathcal M_c(G)$ and the Fourier algebra $\mathcal A(G)$ of all Fourier transforms. For instance, any closed ideal in the Fourier algebra has the form $(\Ann V)\,\widehat{}$ with a unique variety $V$ in $\mathcal C(G)$. Accordingly, we say that a closed ideal $\widehat{I}$ in $\mathcal A(G)$ is synthesizable, if the variety $\Ann I$ in $\mathcal C(G)$ is synthesizable.
\vskip.2cm

We recall the concept of derivations on the Fourier algebra. The continuous linear operator $D:\mathcal A(G)\to \mathcal A(G)$ is called a {\it derivation of order one}, if 
$$
D(\widehat{\mu}\cdot \widehat{\nu})=D(\widehat{\mu})\cdot \widehat{\nu}+ \widehat{\mu}\cdot D(\widehat{\nu})
$$
holds for each $\widehat{\mu}, \widehat{\nu}$ in $\mathcal A(G)$. For each natural number $n\geq 1$, the continuous linear operator $D:\mathcal A(G)\to \mathcal A(G)$ is called a {\it derivation of order $n+1$}, if the bilinear operator
$$
(\widehat{\mu}, \widehat{\nu})\to D(\widehat{\mu}\cdot \widehat{\nu})-D(\widehat{\mu})\cdot \widehat{\nu}- \widehat{\mu}\cdot D(\widehat{\nu})
$$
is a derivation of order $n$ in both variables. All constant multiples of the identity operator on $\mathcal A(G)$ are considered derivations of order $0$. Finally, we call a linear operator on  $\mathcal A(G)$ a {\it derivation}, if it is a derivation of order $n$ for some natural number $n$. It is easy to see that all derivations on $\mathcal A(G)$ form an algebra with unit, which is the identity operator. The elements of the subalgebra generated by derivations of order not greater than one are called {\it polynomial derivations} -- in fact, they are polynomials of derivations of order at most one. Derivations on the Fourier algebra are completely characterized by the following result.

\begin{Thm}\label{gender}
	Let $G$ be a locally compact Abelian group. For each positive \hbox{integer $n$,} the continuous linear operator $D$ on the Fourier algebra of $G$ is a derivation of order at most $n\geq 1$ if and only if it has the form
	$$
	D\widehat{\mu}(m)=\int f_{D,m}\,\widecheck{m}\,d\mu,
	$$
	where $f_{D,m}$ is a generalized polynomial of degree at most $n$ vanishing at zero.
\end{Thm}

The generalized polynomial $f_{D,m}$ is uniquely determined by $D$ and $m$, and it is called the {\it generating function} of $D$. We note that $\widecheck{m}(x)=m(-x)$. In fact, it can be shown that $f_{D,m}$ is independent of $m$. The derivation $D$ is a polynomial derivation if and only if $f_{D,m}$ is a polynomial.

\begin{Thm}\label{localvar}
Let $G$ be a locally compact abelian group. The variety $V$ in $\mathcal C(G)$ is synthesizable if and only if it is localizable.
\end{Thm}

Given a derivation $D$ and an exponential $m$ we denote by $\widehat{I}_{D,m}$ the set of all functions $\widehat{\mu}$ in $\mathcal A(G)$ which are {\it annihilated at $m$} by all derivations of the form
$$
\widehat{\mu}\mapsto \int \varphi(x)\widecheck{m}(x)\,d\mu(x),
$$
where $\varphi$ belongs to the of $f_{D,m}$. It is easy to see that 
$$
I_{D,m}=\Ann \tau(\widecheck{f}_{D,m}m),
$$ 
which implies that $I_{D,m}$ is a closed ideal (see \cite{Sze23}).
As a by-product we obtain that so is the intersection $\widehat{I}_{\mathcal{D},m}=\bigcap_{D\in \mathcal D} \widehat{I}_{D,m}$, for any family $\mathcal D$ of derivations.
\vskip.2cm

Given a closed ideal $\widehat{I}$ in $\mathcal A(G)$ and an exponential $m$, the set of all derivations annihilating $\widehat{I}$ at $m$ is denoted by $\mathcal{D}_{\widehat{I},m}$. The subset of $\mathcal{D}_{\widehat{I},m}$ consisting of all polynomial derivations is denoted by $\mathcal{P}_{\widehat{I},m}$. Clearly, we have the inclusion
\begin{equation}\label{basic}
	\widehat{I}\subseteq \bigcap_m \widehat{I}_{\mathcal{D}_{\widehat{I},m},m}\subseteq \bigcap_m \widehat{I}_{\mathcal{P}_{\widehat{I},m},m}.
\end{equation}

The ideal $\widehat{I}$ is called {\it localizable}, if we have equalities in \eqref{basic}. The main result in \cite{Sze23} reads as follows:
\begin{Thm}\label{loc}
	Let $G$ be a locally compact Abelian group. The ideal $\widehat{I}$ in $\mathcal A(G)$ is synthesizable if and only if it is localizable.
\end{Thm}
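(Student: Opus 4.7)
The plan is to read Theorem \ref{loc} as a direct consequence of Corollary \ref{derid5}, by identifying the span appearing there with the linear span of exponential monomials contained in $\Ann I$.

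First I will reduce the hypothesis ``$\widehat{I}$ is localizable'' --- the double equality in \eqref{basic} --- to a single condition. The inclusion chain \eqref{basic} sandwiches $\widehat{I}$ between the two intersections, so localizability is forced by the outer equality
$$
\widehat{I}=\bigcap_{m}\widehat{I}_{\mathcal{P}_{\widehat{I},m},m}.
$$
The remark following \eqref{basic} shows that for $m\notin Z(\widehat{I})$ the corresponding factor equals $\mathcal A(G)$, so the intersection may be restricted to $m\in Z(\widehat{I})$, placing us exactly in the hypothesis of Corollary \ref{derid5}.

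Next I will translate the density condition from Corollary \ref{derid5} into synthesizability. For any polynomial derivation $D$ the generating function $f_{D,m}$ is a polynomial, as noted after Proposition \ref{gen}, and therefore $\widecheck{f}_{D,m}m$ is a polynomial times the exponential $m$, i.e.\ an exponential monomial. Moreover the inclusion \eqref{basic} gives $\widehat{I}\subseteq\widehat{I}_{D,m}$ for every $D\in\mathcal{P}_{\widehat{I},m}$, so by Proposition \ref{DI} we have $I\subseteq I_{D,m}=\Ann\tau(\widecheck{f}_{D,m}m)$, whence $\widecheck{f}_{D,m}m\in\Ann I$. Conversely, given any exponential monomial $p\cdot m$ in $\Ann I$, I set up the polynomial derivation $D$ whose generating function at $m$ is $\widecheck{p}$; Proposition \ref{DI} then yields $I_{D,m}=\Ann\tau(p\cdot m)\supseteq I$ (because $\Ann I$ is a variety containing $p\cdot m$), which places $D$ in $\mathcal{P}_{\widehat{I},m}$ and forces $m\in Z(\widehat{I})$ (specializing to a constant polynomial, say). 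Hence the linear span $\{\widecheck{f}_{D,m}m : m\in Z(\widehat{I}),\, D\in\mathcal{P}_{\widehat{I},m}\}$ and the linear span of all exponential monomials in $\Ann I$ coincide. By the definition of synthesizability, density of the latter in $\Ann I$ is exactly ``$\widehat{I}$ is synthesizable,'' and combining this with Step~1 and Corollary \ref{derid5} closes the equivalence chain.

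The main point to get right is the dictionary in the second step: one must keep track of the involution $\widecheck{\cdot}$, of which exponentials actually qualify as roots of $\widehat{I}$, and — most importantly — that every $D\in\mathcal{P}_{\widehat{I},m}$ gives rise to a genuine element of $\Ann I$ and not merely to an exponential monomial of $\mathcal C(G)$. This last point relies essentially on the first inclusion of \eqref{basic} (borrowed from \cite{Sze23}), which is what makes Proposition \ref{DI} applicable to pull the exponential monomial $\widecheck{f}_{D,m}m$ into $\Ann I$; once this is in hand, the rest is bookkeeping between Corollary \ref{derid5} and the definition of synthesizability.
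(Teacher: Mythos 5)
Your proof is correct and follows essentially the same route as the paper: both arguments hinge on the dictionary $D\leftrightarrow \widecheck{f}_{D,m}m$ identifying the polynomial derivations in $\mathcal{P}_{\widehat{I},m}$ with the exponential monomials in $\Ann I$ having exponential part $m$, and both then invoke Corollary \ref{derid5}. The only difference is presentational --- you arrange it as a single chain of equivalences and spell out the identification of the two spanning sets explicitly, whereas the paper runs two contrapositive arguments and treats that identification as immediate.
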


\section{Characterization theorems}
In this section we completely characterize synthesizable locally compact Abelian groups using our localization results.. 
\vskip.2cm

First we recall the following result (see \cite[Theorem 1]{MR3589184}):

\begin{Thm}\label{compact}
	Let $G$ be a locally compact Abelian group and let $B$ denote the closed subgroup of $G$ consisting of all compact elements. Then spectral synthesis holds on $G$ if and only if it holds on $G/B$.
\end{Thm}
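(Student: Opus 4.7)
The plan is to prove the two implications separately, with the forward direction being a routine quotient argument and the reverse direction being the substantive content where the compactness of elements of $B$ enters crucially.

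For the forward direction, assume spectral synthesis holds on $G$ and let $W$ be a variety on $G/B$. Pulling back along the canonical projection $\pi\colon G\to G/B$, the space $\widetilde{W}=\{f\circ\pi:f\in W\}$ is a variety on $G$ consisting of $B$-invariant functions. By hypothesis the exponential monomials in $\widetilde{W}$ span a dense subspace, and each such monomial, being an element of $\widetilde{W}$, is $B$-invariant and therefore descends to an exponential monomial on $G/B$ lying in $W$. Applying the same argument to each subvariety yields spectral synthesis on $G/B$.

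For the reverse direction, assume spectral synthesis holds on $G/B$. By Theorem \ref{loc} it suffices to show that every closed ideal $\widehat{I}$ in $\mathcal{A}(G)$ is localizable. The key structural input is that every generalized polynomial $\varphi$ on $G$ is $B$-invariant: for $b\in B$ the orbit $\{kb:k\in\Z\}$ is relatively compact, so $k\mapsto\varphi(x+kb)$ is bounded on compacta; by \eqref{hidiff} this is a polynomial in $k$, so boundedness forces it to be constant, giving $\Delta_b\varphi=0$. Consequently every additive function on $G$ vanishes on $B$ and every polynomial derivation on $\mathcal{A}(G)$ descends to a polynomial derivation on $\mathcal{A}(G/B)$. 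In addition, for every root $m$ of $\widehat{I}$ the sequence $\{m(b)^k\}_{k\in\Z}$ is bounded for each $b\in B$, so $|m(b)|=1$, and $\chi_m:=m|_B$ is a unitary character of $B$.

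From here I would proceed root-by-root. Multiplication by $m^{-1}$ carries the $\chi_m$-equivariant part of $\Ann\widehat{I}$ (translation invariance is preserved up to the scalar $m(y)^{-1}$) onto a variety of $B$-invariant functions, which corresponds to a variety $W_m$ on $G/B$. The $G/B$-synthesis hypothesis provides a dense spanning set of exponential monomials in $W_m$; pulled back to $G$ and multiplied by $m$, these yield exponential monomials $\widecheck{p}\,m\in\Ann\widehat{I}$ in which $p$ is a $B$-invariant polynomial on $G$. By Proposition \ref{DI} these are precisely the generators corresponding to the polynomial derivations in $\mathcal{P}_{\widehat{I},m}$, so the localizability criterion in Corollary \ref{derid5} is met once we know these monomials, ranging over all $m\in Z(\widehat{I})$, span a dense subspace of $\Ann\widehat{I}$.

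The principal obstacle I anticipate is justifying the $\widehat{B}$-isotypic decomposition of an arbitrary element of $\Ann\widehat{I}$ into $\chi$-equivariant pieces and verifying that each piece still lies in $\Ann\widehat{I}$, so that synthesis of the individual $W_m$'s genuinely assembles into synthesis of $\Ann\widehat{I}$. This requires a careful averaging argument over compact subgroups of $B$ together with continuity of the $\mathcal{M}_c(G)$-module action on $\mathcal{C}(G)$; the decisive point is that $B$, being a group of compact elements, is the directed union of its compact open subgroups, and each such subgroup admits a normalized Haar integral that provides the requisite projection onto the $\chi$-isotypic component. Once this decomposition is in place, the remaining argument is bookkeeping through the identifications between polynomial derivations on $\mathcal{A}(G)$ and $\mathcal{A}(G/B)$ established in the previous paragraph.
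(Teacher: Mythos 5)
Your reverse implication has a genuine gap at exactly the point you flag, and it is not a routine one. The plan hinges on decomposing an arbitrary element of $\Ann\widehat{I}$ into $\chi$-equivariant pieces indexed by the unitary characters $\chi$ of $B$. When $B$ is compact this works: each component $f_\chi(x)=\int_B f(x+b)\overline{\chi(b)}\,db$ is a convolution of $f$ with a compactly supported measure, hence lies in $\tau(f)\subseteq\Ann\widehat{I}$, and Fej\'er-type approximate identities built from trigonometric polynomials on $B$ put $f$ in the closed span of its components. But a group of compact elements need not be compact (take $B=\mathbb{Q}_p$, or an infinite discrete torsion group), and then $\widehat{B}$ is not discrete: there is no isotypic decomposition of a continuous function over $\widehat{B}$, and the defining integral of $f_\chi$ over $B$ does not converge. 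Averaging over a compact open subgroup $K\subsetneq B$, as you propose, only produces $K$-equivariant components and therefore reduces the problem to $G/K$ --- whose subgroup of compact elements still contains the nontrivial group $B/K$ --- not to $G/B$. To close the argument you would need an additional limiting or inductive step over the directed family of all compact open subgroups of $B$, and nothing in your sketch supplies it. A smaller bookkeeping issue: the exponential monomials you pull back from $W_m$ have the form $(p\circ\Phi)\cdot\bigl((n\circ\Phi)m\bigr)$ with $n$ an exponential on $G/B$, so they attach to the root $(n\circ\Phi)m$ rather than to $m$; harmless for density, but your root-by-root appeal to Proposition \ref{DI} and Corollary \ref{derid5} needs restating.

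For comparison, the paper avoids any decomposition of functions. It works entirely on the measure side: it pushes $\mu\in\mathcal M_c(G)$ forward to $G/B$ by $\langle\mu_B,\varphi\rangle=\langle\mu,\varphi\circ\Phi\rangle$, shows this carries $\widehat{I}$ to a closed ideal $\widehat{I}_B$ of $\mathcal A(G/B)$, and establishes a bijection between the polynomial derivations annihilating $\widehat{I}$ at $m\circ\Phi$ and those annihilating $\widehat{I}_B$ at $m$ --- using precisely your observation that polynomials on $G$ are $B$-invariant (your stronger argument, that every generalized polynomial is $B$-invariant because a bounded polynomial on $\Z$ is constant, is correct but more than is needed; the paper only uses that additive functions kill compact elements). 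A witness of non-localizability of $\widehat{I}$ then pushes forward to a witness for $\widehat{I}_B$, contradicting localizability on $G/B$ via Theorem \ref{loc}. That route never has to reassemble $\Ann\widehat{I}$ from equivariant pieces, which is exactly the step your proposal leaves open.
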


From this theorem we immediately derive the characterization of discrete synthesizable abelian groups, which was proved in the long and complicated paper \cite{MR2340978}.
	
\begin{Cor}\label{tors}
Spectral synthesis holds on a discrete Abelian group if and only if its torsion free rank is finite.
\end{Cor}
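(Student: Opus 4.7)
My plan is to use Theorem \ref{compact} to reduce to the torsion-free case, and then to invoke the existing characterization of torsion-free discrete Abelian groups having spectral synthesis.

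First I would observe that on a discrete Abelian group $G$, compact subsets are finite, hence an element is compact if and only if it generates a finite cyclic subgroup, if and only if it has finite order. The subgroup $B$ of compact elements of $G$ therefore coincides with the torsion subgroup $T(G)$, and the quotient $G/B=G/T(G)$ is a torsion-free discrete Abelian group. Since passing to the quotient modulo the torsion subgroup does not alter the torsion-free rank, the torsion-free rank of $G$ equals that of $G/T(G)$.

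By Theorem \ref{compact}, spectral synthesis holds on $G$ if and only if it holds on $G/T(G)$. The problem is thus reduced to showing that a torsion-free discrete Abelian group has spectral synthesis if and only if its rank is finite. This is precisely the characterization obtained in \cite{MR2340978}: the ``if'' direction rests on Lefranc's synthesis theorem for $\Z^n$ \cite{MR0098951} together with an extension argument for torsion-free groups of finite rank, while the ``only if'' direction relies on the counterexample of \cite{MR2039084}, which produces a non-synthesizable variety on any torsion-free discrete Abelian group of infinite torsion-free rank.

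I expect the main obstacle, were one to try to prove the corollary self-containedly using only the tools of the present paper, to lie in the passage from $\Z^n$ to an arbitrary torsion-free discrete Abelian group of rank $n$: iterating the $\Z$-extension result of \cite{Sze23b} starting from the trivial group gives spectral synthesis on every $\Z^n$, but neither this extension nor the compact-element extension of Theorem \ref{compact} visibly covers rank-$n$ torsion-free groups such as $\mathbb{Q}$, whose treatment requires the finer structural analysis carried out in \cite{MR2340978}.
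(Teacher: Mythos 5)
Your reduction is the same as the paper's: identify the subgroup $B$ of compact elements of a discrete group with the torsion subgroup $T(G)$, apply Theorem \ref{compact} to pass to the torsion-free quotient, and then settle the torsion-free case. Where you differ is in how the two directions for the torsion-free quotient are discharged. For the failure of synthesis in infinite rank, the paper does not route through \cite{MR2340978} but argues inside its own localization framework: infinite torsion-free rank yields a generalized polynomial that is not a polynomial (by \cite{MR2039084}), hence a non-polynomial derivation on the Fourier algebra, hence a strict inclusion $\widehat{I}_{\mathcal D_{\widehat{I},m},m}\subsetneq \widehat{I}_{\mathcal P_{\widehat{I},m},m}$ forcing non-localizability; your citation of the counterexample via \cite{MR2340978} reaches the same conclusion but loses the connection to derivations that the paper is trying to exhibit. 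For the positive direction, the paper asserts that $G/T$ is a continuous homomorphic image of $\Z^n$ and invokes Lefranc; as you correctly point out, this is not true for finite-rank torsion-free groups that are not finitely generated, such as $\mathbb{Q}$, so the paper's own sufficiency argument has exactly the gap you anticipated, and your fallback to the finer analysis of \cite{MR2340978} (or, alternatively, an argument realizing a finite-rank torsion-free group as a union of an increasing chain of copies of $\Z^n$ rather than a quotient) is the sounder route. In short, your proposal is correct and structurally parallel, is more cautious where the paper is careless, and is less self-contained where the paper leans on its derivation machinery.
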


\begin{proof}
Indeed, if $G$ is discrete Abelian, then $B=T$, the set of all elements of finite order, hence $G/B=G/T$ is a homomorphic image of some power of $\Z$, which is the torsion free rank of $G$. If it is finite, then $G/T$ is synthesizable, as $\Z^k$ is synthesizable, by \cite{MR0098951}, and its homomorphic image is synthesizable, by the results in \cite{MR3589184}. On the other hand, if the torsion free rank is infinite, then by the results in \cite{MR2039084}, there are non-polynomial derivations on $G$, hence the chain of inclusions
$$
\widehat{I}\subseteq \widehat{I}_{\mathcal{D}_{\widehat{I},m},m}\subsetneqq \widehat{I}_{\mathcal{P}_{\widehat{I},m},m}\subseteq \bigcap_m \widehat{I}_{\mathcal{P}_{\widehat{I},m},m}
$$
shows that some ideal $\widehat{I}$ is non-localizable, hence spectral synthesis does not hold on $G$.
\end{proof}

We need another result for our characterization theorems \hbox{(see \cite[Lemma 4]{Sze24}):}

\begin{Thm}\label{zext}
	Let $G$ be a locally compact Abelian group. Then spectral synthesis holds on $G$ if and only if it holds on $G\times \Z$.
\end{Thm}

Now we are ready to settle the characterization problem of synthesizable locally compact abelian groups.

\begin{Cor}
	Let $G$ be a compactly generated locally compact Abelian group. Then spectral synthesis holds on $G$ if and only if $G$ is topologically isomorphic to $\R^a\times \Z^b\times F$, where $a\leq 1$ and $b$ are nonnegative integers, and $F$ is an arbitrary compact Abelian group.
\end{Cor}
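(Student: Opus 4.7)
The plan is to apply the structure theorem for compactly generated locally compact Abelian groups, which asserts that any such $G$ is topologically isomorphic to $\R^a\times \Z^b\times F$ with $a,b$ nonnegative integers and $F$ a compact Abelian group, and then combine this decomposition with Theorems \ref{compact} and \ref{main} together with the classical results of Schwartz, Lefranc, and Gurevich.

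First I would identify the closed subgroup $B$ of compact elements of $G$ with the factor $F$. Since the only compact subgroup of $\R$ and of $\Z$ is trivial, every compact element of $\R^a\times \Z^b\times F$ must lie in $\{0\}\times\{0\}\times F$; conversely every element of $F$ is compact. Thus $B\cong F$ and $G/B\cong \R^a\times \Z^b$. By Theorem \ref{compact}, spectral synthesis holds on $G$ if and only if it holds on $\R^a\times \Z^b$, so the problem reduces to characterizing those pairs $(a,b)$ for which synthesis holds on $\R^a\times \Z^b$.

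For the sufficiency, assume $a\leq 1$. If $a=0$, then $\R^a\times\Z^b=\Z^b$ and spectral synthesis holds by Lefranc's theorem \cite{MR0098951}. If $a=1$, I would start from Schwartz's Theorem \ref{Sch}, which yields spectral synthesis on $\R$, and apply Theorem \ref{main} inductively $b$ times to propagate synthesis through each additional $\Z$-factor, giving synthesis on $\R\times \Z^b$. Theorem \ref{compact} then lifts synthesis from $G/B$ back to $G$, establishing the "if" direction.

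For the necessity, suppose spectral synthesis holds on $G$, hence on the quotient $\R^a\times\Z^b$, and hence (as noted in the proof of Theorem \ref{compact}) on every continuous homomorphic image of this quotient. If $a\geq 2$, then the coordinate projection $\R^a\times\Z^b\to \R^a$ would force synthesis on $\R^a$, contradicting Gurevich's counterexamples \cite{MR0390759} which show that synthesis fails on $\R^n$ for $n\geq 2$. Hence $a\leq 1$. The only nontrivial technical point is the identification $B\cong F$ under the structure decomposition; everything else is an assembly of the already-established theorems, so I expect no serious obstacle here.
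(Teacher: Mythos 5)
Your proposal is correct and follows essentially the same route as the paper: the Hewitt--Ross structure theorem, Schwartz/Lefranc/Gurevich for $\R^a$ and $\Z^b$, Theorem \ref{main} to adjoin the $\Z$-factors, and Theorem \ref{compact} to handle the compact factor. The only cosmetic difference is that you invoke the full equivalence of Theorem \ref{compact} at the outset via the identification $B=\{0\}\times\{0\}\times F$, whereas the paper uses a direct projection onto $\R^a$ for necessity and applies Theorem \ref{compact} only at the end of the sufficiency argument; both are sound.
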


\begin{proof}
	By the Structure Theorem of compactly generated locally compact Abelian groups (see \cite[(9.8) Theorem]{MR0156915}) $G$ is topologically isomorphic to
	$
	\R^a\times \Z^b\times F,
	$
	where $a,b$ are nonnegative integers, and $F$ is a compact Abelian group. If spectral synthesis holds on $G$, then it holds on its projection $\R^a$. By the results in \cite{MR0023948, MR0390759}, spectral synthesis holds on $\R^a$ if and only if $a\leq 1$, hence $G$ is topologically isomorphic to $\R^a\times \Z^b\times F$ where $a\leq 1$ and $b$ are nonnegative integers, and $F$ is a compact Abelian group.  
	\vskip.2cm
	
	Conversely, let $G=\R\times \Z^b\times F$ with $b$ is a nonnegative integer, and $F$ a compact Abelian group. By \cite{MR0023948}, spectral synthesis holds on $\R$. By repeated application of Theorem \ref{zext}, we have that spectral synthesis holds on $\R\times \Z^b$ with any nonnegative integer $b$. Finally, by Theorem \ref{compact}, spectral synthesis holds on $\R\times \Z^b\times F$. Our proof is complete.
\end{proof}

\begin{Cor}
	Let $G$ be a locally compact Abelian group.  Let $B$ denote the closed subgroup of all compact elements in $G$. Then spectral synthesis holds on $G$ if and only if $G/B$ is topologically isomorphic to $\R^n\times F$, where $n\leq 1$ is a nonnegative integer, and $F$ is a discrete torsion free Abelian group of finite rank.
\end{Cor}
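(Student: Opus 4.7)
The strategy is to reduce via Theorem \ref{compact} to the case of groups without nontrivial compact elements, use the LCA structure theory to split such a group as $\R^n\times D$ with $D$ discrete torsion-free, and then invoke Corollary \ref{tors}, the Schwartz--Gurevich dichotomy, and iterations of Theorem \ref{main}.

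The key structural step is the following: every LCA group $H$ whose only compact element is $0$ is topologically isomorphic to $\R^n\times D$ with $n\geq 0$ and $D$ discrete torsion-free. Indeed, $H$ has a compactly generated open subgroup, which by the structure theorem is of the form $\R^a\times \Z^b\times K$ with $K$ compact; the hypothesis forces $K=\{0\}$, so $\R^n$ (with $n=a$) is open in $H$ and $D:=H/\R^n$ is discrete. Torsion-freeness of $D$ follows from divisibility of $\R^n$: if $kd\in \R^n$ with $k\geq 1$, pick $v\in \R^n$ with $kv=kd$; then $d-v$ has finite order, hence generates a finite compact subgroup of $H$, so $d-v=0$. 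Since $\R^n$ is divisible, hence injective as an abstract abelian group, and $D$ is discrete, the sequence $0\to \R^n\to H\to D\to 0$ splits topologically.

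Applying this to $H=G/B$, which has trivial subgroup of compact elements by the standard theory, reduces the statement to: \emph{SS holds on $\R^n\times D$ with $D$ discrete torsion-free if and only if $n\leq 1$ and $D$ has finite rank.} The forward direction is immediate, since SS on $\R^n\times D$ descends to each projection (a continuous homomorphic image), forcing $n\leq 1$ by Schwartz--Gurevich and $D$ of finite rank by Corollary \ref{tors}.

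For the converse with $n=0$, the conclusion is exactly Corollary \ref{tors}. For $n=1$, I would start from SS on $\R$ (Schwartz's theorem) and iterate Theorem \ref{main} to obtain SS on $\R\times \Z^b$ for every $b\geq 0$; extending this from $F=\Z^b$ to an arbitrary discrete torsion-free group $F$ of finite rank (e.g.\ $F=\mathbb{Q}$) is the step I anticipate to be the main obstacle. A natural route is to realize $F$ as the direct limit of its finitely generated subgroups, each isomorphic to some $\Z^{b}$ with $b$ bounded by $\mathrm{rank}\,F$, and to show that the localizability characterization of SS established in Theorem \ref{loc} is compatible with such limits; alternatively, one could try to extend the mechanism behind Theorem \ref{main} from a $\Z$-factor to arbitrary discrete torsion-free factors of finite rank. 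Once SS on $G/B$ is established, Theorem \ref{compact} transports it back to $G$, completing the argument.
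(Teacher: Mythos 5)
Your reduction and the necessity direction coincide with the paper's argument: where the paper obtains the decomposition $G/B\cong\R^n\times F$ by citing Hewitt--Ross (24.34)--(24.35) (sufficiently many real characters), you rederive the same splitting from an open compactly generated subgroup together with divisibility of $\R^n$, which is a correct and self-contained substitute. The forward implication (forcing $n\leq 1$ via Schwartz--Gurevich and finite rank of $F$ via the discrete case) and the case $n=0$ of the converse are exactly as in the paper.

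The genuine gap is the step you flag yourself: passing from spectral synthesis on $\R\times\Z^{b}$ (obtained by iterating Theorem \ref{main}) to spectral synthesis on $\R\times F$ for an arbitrary discrete torsion-free $F$ of finite rank. You name two candidate routes --- a direct-limit argument over the finitely generated subgroups of $F$ made compatible with the localizability criterion of Theorem \ref{loc}, or an extension of the mechanism of Theorem \ref{main} to general torsion-free discrete factors --- but you execute neither, so as written your sufficiency proof covers only the case where $F$ is free of finite rank. For comparison, the paper disposes of this step in one line by asserting that such an $F$ is a continuous homomorphic image of $\Z^{k}$, so that $\R\times F$ is a continuous homomorphic image of $\R\times\Z^{k}$; be aware that this assertion is itself false for non-finitely-generated $F$ (for instance $F=\mathbb{Q}$ is not a quotient of any $\Z^{k}$, since quotients of $\Z^{k}$ are finitely generated), so the step is not a formality and genuinely requires an argument of the kind you sketch --- e.g.\ an exhaustion of $F$ by finitely generated (hence free, of rank at most $\mathrm{rank}\,F$) subgroups combined with the localization criterion. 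Until one of your two routes is carried out, the proof is incomplete at exactly this point.
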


\begin{proof}
	First we prove the necessity. If spectral synthesis holds on $G$, then it holds on $G/B$. By \cite[(24.34) Theorem]{MR0156915}, $G/B$ has sufficiently enough real characters. By \cite[(24.35) Corollary]{MR0156915}, $G/B$ is topologically isomorphic to $\R^n\times F$, where $n$ is a nonnegative integer, and $F$ is a discrete torsion-free Abelian group. As spectral synthesis holds on $\R^n\times F$, it holds on the continuous projections $\R^n$ and $F$. Then we have $n\leq 1$, and the torsion-free rank of $F$ is finite, by \cite{MR2340978}. 
	\vskip.2cm
	
	For the sufficiency, if $F$ is a torsion-free discrete Abelian group with finite rank, then it is the (continuous) homomorphic image of $\Z^k$ with some nonnegative integer $k$. By repeated application of Theorem \ref{zext},  we have that spectral synthesis holds on $\R\times \Z^k$, and then it holds on its continuous homomorphic image $\R\times F$. Finally, by Theorem \ref{compact}, we have that spectral synthesis \hbox{holds on $G$. }
\end{proof}

\section{Statements and Declarations}
Data sharing not applicable to this article as no datasets were generated or analysed during the current study. There are no financial or non-financial interests that are directly or indirectly related to the work submitted for publication.


\begin{thebibliography}{10}
	
\bibitem{MR0023948}
L.~Schwartz,
\newblock Th\'eorie g\'en\'erale des fonctions moyenne-p\'eriodiques,
\newblock {\em Ann. of Math. (2)}, {\bf 48} 857--929, 1947.

\bibitem{MR0098951}
M.~Lefranc,
\newblock Analyse spectrale sur {$Z_{n}$},
\newblock {\em C. R. Acad. Sci. Paris}, {\bf 246} 1951--1953, 1958.

\bibitem{MR0156915}
E.~Hewitt and K.~A. Ross,
\newblock {\em Abstract harmonic analysis. {V}ol. {I}: {S}tructure of
	topological groups. {I}ntegration theory, group representations}.
\newblock Die Grundlehren der mathematischen Wissenschaften, Bd. 115. Academic
Press Inc., Publishers, New York, 1963.


\bibitem{MR0177260}
R.~J. Elliott,
\newblock Two notes on spectral synthesis for discrete {A}belian groups,
\newblock {\em Proc. Cambridge Philos. Soc.}, {\bf 61(3)} 617--620, 1965.

\bibitem{MR0365062}
W.~Rudin.
\newblock {\em Functional analysis},
\newblock McGraw-Hill Book Co., New York, 1973.
\newblock McGraw-Hill Series in Higher Mathematics.

\bibitem{MR0390759}
D.~I.~Gurevi{\v{c}},
\newblock Counterexamples to a problem of {L}. {S}chwartz,
\newblock {\em Funkcional. Anal. i Prilo\v zen.}, {\bf 9(2)} 29--35, 1975.

\bibitem{MR2039084}
L.~Sz{\'e}kelyhidi,
\newblock The failure of spectral synthesis on some types of discrete abelian groups,
\newblock {\em J. Math. Anal. Appl.}, {\bf 291(2)} 757--763, 2004.

\bibitem{MR2340978}
M.~Laczkovich and L.~Sz{\'e}kelyhidi,
\newblock Spectral synthesis on discrete abelian groups,
\newblock {\em Math. Proc. Cambridge Philos. Soc.}, {\bf 143(1)} 103--120, 2007.

\bibitem{MR3137136}
S.~S. Platonov.
\newblock On spectral synthesis on zero-dimensional abelian groups.
\newblock {\em Mat. Sb.}, {\bf 204(9)} 99--114, 2013.

\bibitem{MR3438592}
S.~S. Platonov.
\newblock On spectral synthesis on element-wise compact {A}belian
groups
\newblock {\em Mat. Sb.}, {\bf 206(8)} 127--152, 2015.

\bibitem{MR4789359}
L.~Sz{\'e}kelyhidi,
\newblock New results on spectral synthesis, \newblock {\rm Monatsh. Math.}, {\bf 205(1)}  199--204, 2024.

\bibitem{Sze24}
L.~Sz\'{e}kelyhidi,
\newblock {\rm Harmonic synthesis on group extensions},
\newblock In {\em Advances in Differential Analysis and Functional Analysis}, Mathematics MDPI, 2024.

\bibitem{MR3589184}
L.~Sz\'ekelyhidi,
\newblock Spectral synthesis on continuous images,
\newblock {\em Ann. Univ. Sci. Budapest. Sect. Comput.}, {\bf 56} 331--338, 2024.

\bibitem{Sze23}
L.~Sz{\'e}kelyhidi,
\newblock Spectral synthesis on varieties, 
https://doi.org/ 10.48550/ arXiv.2306.17438


\end{thebibliography}
\end{document}